\documentclass[3p]{elsarticle}
 \usepackage{graphics}
 \usepackage{graphicx}
 \usepackage{epsfig}
\usepackage{amssymb}
 \usepackage{amsthm}
 \usepackage{lineno}
 \usepackage{amsmath}
   \numberwithin{equation}{section}
\usepackage{mathrsfs}
\usepackage{color}

\theoremstyle{plain}
\newtheorem{theorem}{Theorem}

\newtheorem{lemma}[theorem]{Lemma}

\newtheorem{definition}[theorem]{Definition}

\numberwithin{equation}{section} \numberwithin{theorem}{section}
\newproof{pot}{{\bf{Proof of Theorem \ref{Thm1.1}}\rm}}

\begin{document}
\begin{frontmatter}
\author{Zhouji Ma}
\ead{mazj588@nenu.edu.cn}
\author{Xiaojun Chang\corref{cor2}}
\ead{changxj100@nenu.edu.cn}
\cortext[cor2]{Corresponding author.}

\address{School of Mathematics and Statistics \& Center for Mathematics and Interdisciplinary Sciences,\\
 Northeast Normal University, Changchun 130024, Jilin,
China}

\title{Normalized ground states of nonlinear biharmonic Schr\"odinger equations with Sobolev critical growth and combined nonlinearities}

\begin{abstract}
This paper is devoted to studying the following nonlinear biharmonic Schr\"odinger equation with combined power-type nonlinearities
\begin{equation*}
\begin{aligned}
   \Delta^{2}u-\lambda u=\mu|u|^{q-2}u+|u|^{4^*-2}u\quad\mathrm{in}\ \mathbb{R}^{N},
\end{aligned}
\end{equation*}
where $N\geq5$, $\mu>0$, $2<q<2+\frac{8}{N}$, $4^*=\frac{2N}{N-4}$ is the $H^2$-critical Sobolev exponent, and $\lambda$ appears as a Lagrange multiplier. By analyzing the behavior of the ground state energy with respect to the prescribed mass, we establish the existence of normalized ground state solutions. Furthermore, all ground states are proved to be local minima of the associated energy functional.
\end{abstract}
\begin{keyword}
Biharmonic Schr\"odinger equations\sep Ground state\sep Normalized solutions\sep Critical growth\sep Combined nonlinearities
\end{keyword}
\end{frontmatter}

\section{Introduction and main results}\label{intro}
In this paper, we are concerned with the following nonlinear biharmonic Schr\"odinger equation
\begin{equation}\label{eq01}
\Delta^{2}u-\lambda u=\mu|u|^{q-2}u+|u|^{4^*-2}u\quad\mathrm{in}\ \mathbb{R}^{N}
\end{equation}
under the constraint
\begin{equation}\label{eq02}
\int_{\mathbb{R}^N}|u|^2dx=c,
\end{equation}
where $c>0$, $N\geq5$, $\mu>0$, $2<q<2+\frac{8}{N}$, $4^*=\frac{2N}{N-4}$ and $\lambda\in \mathbb{R}$ is a Lagrange multiplier.

The interest in studying \eqref{eq01}-\eqref{eq02} comes from seeking the standing wave with form $\psi(t,x)=e^{-i\lambda t}u(x)$ of the following time-dependent biharmonic Schr\"odinger equation
\begin{equation}\label{eq03}
i\partial_{t}\psi-\Delta^{2}\psi+f(|\psi|)\psi=0.
\end{equation}
This equation was considered in \cite{IK1983,T1985} to study the stability of solitons in magnetic materials once the effective quasi particle mass becomes infinite. For some recent studies on \eqref{eq03}, one can see \cite{FIB, MXZ2009, P2009, ZZ2010} etc. The biharmonic operator $\Delta^{2}$ was also used in \cite{K1996, KS2000} to reveal the effects of higher-order dispersion terms in
the mixed-dispersion fourth-order Schr\"odinger equations.

In recent years, much attention has been paid to the study of normalized solutions of nonlinear Schr\"odinger equations,
see for example \cite{J1997, JJTV2022, S2020, WW2022} and the references therein. For some related studies on the mixed-dispersion fourth-order Schr\"odinger equations, we refer the reader to \cite{BCGJ2019, BFJ2019, FJMM2022}. As far as we know, there are only very few references to the study of \eqref{eq01}-\eqref{eq02} in the literature.

Let $f(|u|)=|u|^{p-2}$. Then \eqref{eq03} is usually called $L^2$-subcritical if $p\in (2, 2+\frac{8}{N})$, $L^2$-critical if $p=2+\frac{8}{N}$, and $L^2$-supercritical if $p>2+\frac{8}{N}$. Recently, Phan \cite{Phan2018} obtained the existence of normalized ground state solutions of the biharmonic Schr\"odinger equation with a coercive potential and the $L^2$-critical nonlinearity.

In this paper, inspired by \cite{JJTV2022}, we are interested in the existence of normalized ground state solutions of \eqref{eq01}-\eqref{eq02}, where the nonlinearity consists of a $L^2$-subcritical term $\mu|u|^{q-2}u$ and the $H^2$-critical term $|u|^{4^*-2}u$.
We consider the associated energy functional
$$
J(u):=\frac{1}{2}\|\Delta u\|_{2}^{2}-\frac{\mu}{q}\|u\|_{q}^{q}-\frac{1}{4^*}\|u\|_{4^*}^{4^*}, \forall u\in H^2:=H^2(\mathbb{R}^N).
$$
By standard arguments as in \cite{M1996}, $J$ is of $C^{1}$ and any critical point of $J$ restricted to the set
$S(c):=\{u\in H^{2}: \|u\|_{2}^{2}=c\}$
corresponds to a normalized solution of \eqref{eq01}.
\begin{definition}\label{blp}
We say that a function $u_{c}\in S(c)$ is a normalized ground state solution to \eqref{eq01} if it satisfies
$J(u_{c})=\inf\{J(u), u\in S(c),(J|_{S(c)})^{'}(u)=0\}.$
\end{definition}
For $\rho>0$, set
$$\Lambda_{\rho}(c):=\{u\in S(c):\|\Delta u\|_{2}^{2}<\rho\},\qquad\partial \Lambda_{\rho}(c):=\{u\in S(c):\|\Delta u\|_{2}^{2}=\rho\}.
$$
Then we can state our main result:
\begin{theorem}\label{Thm1.1}
For any $\mu>0$, there exist $c_{0}:=c_{0}(\mu, q, N)>0$ and $\rho_0:=\rho(c_0)>0$ such that, for any $c\in (0,c_{0})$, $J|_{S(c)}$ has a ground state, which is a local minimizer of $J$ in the set $\Lambda_{\rho_0}(c)$. Furthermore, any ground state of $J|_{S(c)}$ is a local minimizer of $J$ in $\Lambda_{\rho_0}(c)$.
\end{theorem}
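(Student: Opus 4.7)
The plan is to follow the local minimization strategy on a ``well'' inside $S(c)$ carved out by the bound $\|\Delta u\|_2^2<\rho_0$. Combining the Gagliardo--Nirenberg inequality $\|u\|_q^q\le C\|\Delta u\|_2^{q\gamma_q}\|u\|_2^{q(1-\gamma_q)}$, with $\gamma_q=N(q-2)/(4q)\in(0,1)$, and the $H^2$-critical Sobolev inequality $\|u\|_{4^*}^{4^*}\le\mathcal{S}^{-4^*/2}\|\Delta u\|_2^{4^*}$, one gets on $S(c)$
$$
J(u)\ge g(\|\Delta u\|_2^2),\quad g(t):=\tfrac{1}{2} t-\tfrac{\mu C}{q}\,c^{q(1-\gamma_q)/2}\,t^{q\gamma_q/2}-\tfrac{\mathcal{S}^{-4^*/2}}{4^*}\,t^{4^*/2}.
$$
Since $2<q<2+8/N$ forces $q\gamma_q/2<1<4^*/2$, the function $g$ admits a strictly positive local maximum at some $\rho_0>0$ provided $c<c_0$ with $c_0=c_0(\mu,q,N)>0$ small; a direct computation shows $\rho_0$ can be chosen independent of $c$ on this range. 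This furnishes the barrier $\inf_{\partial\Lambda_{\rho_0}(c)}J\ge g(\rho_0)>0$. Conversely, the $L^2$-invariant dilation $u_\tau(x):=\tau^{N/2}u_1(\tau x)$ of any fixed $u_1\in S(c)$ satisfies $\|\Delta u_\tau\|_2^2=\tau^4\|\Delta u_1\|_2^2$, $\|u_\tau\|_q^q=\tau^{N(q-2)/2}\|u_1\|_q^q$, $\|u_\tau\|_{4^*}^{4^*}=\tau^{4N/(N-4)}\|u_1\|_{4^*}^{4^*}$; the ordering $N(q-2)/2<4<4N/(N-4)$ makes the $L^q$ term dominant as $\tau\to 0^+$, so $u_\tau\in\Lambda_{\rho_0}(c)$ and $J(u_\tau)<0$. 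Therefore $m(c):=\inf_{\Lambda_{\rho_0}(c)}J<0<\inf_{\partial\Lambda_{\rho_0}(c)}J$ and every minimizing sequence stays strictly inside the well.

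\textbf{Achieving the infimum.}
Ekeland's variational principle applied to $J$ on $\overline{\Lambda_{\rho_0}(c)}$ then produces a bounded Palais--Smale sequence $\{u_n\}\subset\Lambda_{\rho_0}(c)$ for $J|_{S(c)}$ at level $m(c)$, with Lagrange multipliers $\lambda_n\to\lambda$. Up to a subsequence $u_n\rightharpoonup u$ in $H^2$. To rule out vanishing I would note that $m(c)<0$ combined with the non-negativity of the critical term forces $\liminf\|u_n\|_q^q>0$, so after an $\mathbb{R}^N$-translation the weak limit $u$ is non-trivial; passing to the limit in the Euler--Lagrange equation and using $\int|u|^2\le c$ then yields $\lambda<0$. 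Setting $v_n:=u_n-u$, the Hilbert splittings of $\|\cdot\|_2^2$ and $\|\Delta\cdot\|_2^2$ combined with Brezis--Lieb give $\limsup\|\Delta v_n\|_2^2\le\rho_0$; since $\rho_0$ is placed, by design of $g$, strictly below the Sobolev bubble threshold, $\|v_n\|_{4^*}\to 0$. Feeding $\lambda<0$ into the decoupled Euler--Lagrange relation for $v_n$ then forces $\|v_n\|_{H^2}\to 0$. Consequently $u_n\to u$ strongly in $H^2$, $u\in S(c)$ with $J(u)=m(c)$, so $u$ is a normalized ground state.

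\textbf{Location of all ground states.}
For the second assertion, differentiating $\tau\mapsto J(u_\tau)$ at $\tau=1$ for any critical point of $J|_{S(c)}$ yields the Pohozaev-type identity
$$
2\|\Delta u\|_2^2=\tfrac{\mu N(q-2)}{2q}\|u\|_q^q+\|u\|_{4^*}^{4^*}.
$$
Substituting back into $J$ and reusing the Gagliardo--Nirenberg and Sobolev estimates underlying $g$, one verifies $J(u)\ge g(\|\Delta u\|_2^2)$ whenever $P(u)=0$. Thus any critical point with $\|\Delta u\|_2^2\ge\rho_0$ has energy at least $g(\rho_0)>0>m(c)$, so every ground state necessarily lies in $\Lambda_{\rho_0}(c)$ and, via the barrier, is a local minimizer of $J$ in that set.

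\textbf{Main obstacle.}
The decisive step is the compactness argument, where two independent sources of non-compactness collide: the embedding $H^2\hookrightarrow L^{4^*}$ is not compact, enabling Sobolev-critical bubbling, and translation invariance of $\mathbb{R}^N$ allows mass to escape to infinity. The local-well bound $\|\Delta u_n\|_2^2<\rho_0$ caps the possible bubble size, and the strict negativity $m(c)<0$ rules out vanishing; weaving these estimates together via Brezis--Lieb and, if required, a strict subadditivity $m(c)<m(c_1)+m(c-c_1)$ for $0<c_1<c$ to exclude weak-but-not-strong convergence is the technical heart of the proof.
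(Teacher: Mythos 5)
Your well geometry (Gagliardo--Nirenberg plus the critical Sobolev inequality, the barrier at $\rho_0$, and the negative infimum via the mass-preserving dilation) is the same as the paper's, but your compactness step is a different route (Ekeland/Palais--Smale, sign of the Lagrange multiplier, bubble threshold) and, as written, it has genuine gaps. First, the vanishing exclusion ``$m(c)<0$ combined with the non-negativity of the critical term forces $\liminf\|u_n\|_q^q>0$'' is not valid: the critical term enters $J$ with a \emph{minus} sign, so it cannot be discarded in your favor; what actually works is the barrier inequality $\tfrac12-\tfrac{\mathcal{S}^{4^*}}{4^*}\rho_0^{\alpha_2}>0$ (a consequence of $f(c_0,\rho_0)=0$), which shows that $\|u_n\|_q\to0$ would force $\|\Delta u_n\|_2\to0$ and hence $J(u_n)\to0$, contradicting $m(c)<0$ -- this is exactly the paper's inequality (3.1). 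Second, $\lambda<0$ does not follow from ``$\int|u|^2\le c$''; it needs the Pohozaev identity for the limit equation together with $\|u\|_q\neq0$. Third, and most seriously, the claim that $\limsup\|\Delta v_n\|_2^2\le\rho_0$ with ``$\rho_0$ below the Sobolev bubble threshold'' yields $\|v_n\|_{4^*}\to0$ is unsubstantiated on two counts: $\rho_0<\mathcal{S}^{-N/2}$ does not follow from $g(\rho_0)>0$ alone (it requires the explicit computation at the maximizer defining $\rho_0$), and even granted that, one needs a Struwe-type splitting for the remainder \emph{and} must exclude dichotomy, i.e. the possibility that $v_n$ carries $L^2$-mass in translated bumps, in which case $\|v_n\|_q\not\to0$ and your ``decoupled Euler--Lagrange'' step collapses. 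The strict subadditivity $m(c)<m(\alpha)+m(c-\alpha)$, which you relegate to an ``if required'' aside, is precisely how the paper closes this hole (Lemmas 2.5--2.6 together with the Brezis--Lieb splitting in Theorem 3.1): once $\|w_n\|_2\to0$ is known, interpolation gives $\|w_n\|_q\to0$ and the same barrier inequality kills any critical bubble, with no Lagrange multipliers and no threshold comparison at all.

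There is also a gap in your final localization step. From $J(u)\ge g(\|\Delta u\|_2^2)$ on the Pohozaev set you conclude that ``any critical point with $\|\Delta u\|_2^2\ge\rho_0$ has energy at least $g(\rho_0)>0$'', but this is a non sequitur: $g$ is not increasing beyond its maximum (indeed $g(t)\to-\infty$ as $t\to\infty$), so the inequality gives no positive lower bound for $\|\Delta u\|_2^2>\rho_0$. The paper instead works on the fiber $\psi_v(s)=J(v_s)$: any critical point satisfies $Q(u)=0$, hence sits at a zero of $\psi_v'$, which has at most two zeros; the first is a local minimum lying inside $\Lambda_{\rho_0}(c)$ with negative energy, and the second is a local maximum with positive energy because the fiber must cross $\partial\Lambda_{\rho_0}(c)$, where $J>0$. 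Since the ground-state level is at most $m(c)<0$, every ground state sits at a first zero and therefore in $\Lambda_{\rho_0}(c)$. You would need this fiber argument (or a correct quantitative estimate on the Pohozaev set) to justify the second assertion of the theorem.
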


The paper is organized as follows. In Section \ref{preliminary},
some preliminary results are presented. In Section \ref{ground},
we give the proof of Theorem \ref{Thm1.1}.

\section{Preliminaries}\label{preliminary}
In this section, we establish some useful preliminary results. We recall that
$\|u\|_{H^{2}}:=(\int_{\mathbb{R}^{N}}|\Delta u|^{2}+|u|^{2}dx)^{\frac{1}{2}}$
is a norm on $H^{2}$ which is equivalent to the standard one
$\|u\|_{H^{2}}:=(\int_{\mathbb{R}^{N}}|D^{2}u|^{2}+|\nabla u|^{2}+|u|^{2}dx)^{\frac{1}{2}}.$  We denote by $\|\cdot\|_q$ the standard norm in $L^{q}(\mathbb{R}^N)$.

Assume that $N\geq5$. For any $u\in H^{2}$, we have the following Gagliardo-Nirenberg inequality:
\begin{equation}\label{eq04}
\|u\|_{r}\leq C_{N,r}\|\Delta u\|_{2}^{\beta }\|u\|_{2}^{(1-\beta)}~ \mathrm{for}~r\in (2,4^*),~ \beta:=\frac{N}{2}(\frac{1}{2}-\frac{1}{r}).
\end{equation}	

 By similar arguments as in \cite{M1996}, we have the following Lions' type lemma in $H^{2}$.
\begin{lemma}\label{Lions}
If $\{u_{n}\}$ is bounded in $H^{2}$ such that
$\sup\limits_{y\in\mathbb{R}^{N}}\int_{B(y,1)}|u_{n}|^{2}dx\rightarrow0\ \mbox{as}\ n\rightarrow\infty$,
then $u_{n}\rightarrow0$ in $L^{p}(\mathbb{R}^{N})$ for $2<p<4^*$.
\end{lemma}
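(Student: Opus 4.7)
The plan is to adapt Lions' classical argument (originally for $H^1$) to the $H^2$ setting, using the critical Sobolev embedding $H^2\hookrightarrow L^{4^*}$ in place of $H^1\hookrightarrow L^{2^*}$. The backbone of the argument consists of a covering of $\mathbb{R}^N$ by unit balls with bounded overlap, a local H\"older interpolation between $L^2$ and $L^{4^*}$ on each ball, and a discrete H\"older inequality to assemble the pieces.

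I would first fix a family $\{y_i\}\subset\mathbb{R}^N$ such that the balls $B_i:=B(y_i,1)$ cover $\mathbb{R}^N$ and each point lies in at most $\kappa=\kappa(N)$ of them. For any $p\in(2,4^*)$, write $p=\alpha_p+\beta_p$ with $\alpha_p/2+\beta_p/4^*=1$; solving gives $\beta_p=4^*(p-2)/(4^*-2)$. H\"older's inequality on $B_i$ yields
\[
\int_{B_i}|u|^p\,dx\le\|u\|_{L^2(B_i)}^{\alpha_p}\|u\|_{L^{4^*}(B_i)}^{\beta_p},
\]
and the local Sobolev embedding $H^2(B_i)\hookrightarrow L^{4^*}(B_i)$, whose constant is $i$-independent by translation invariance, replaces the last factor by $C\|u\|_{H^2(B_i)}^{\beta_p}$.

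The heart of the argument is to sum over $i$ and extract a factor of $S_n:=\sup_y\int_{B(y,1)}|u_n|^2\,dx$. Assuming first $\beta_p\le 2$ (equivalently $p\le 4-4/4^*$), I would apply H\"older for series with conjugate exponents $r=2/\beta_p$ and $r'=2/(2-\beta_p)$. The $H^2$-part reduces to $\sum_i\|u\|_{H^2(B_i)}^2\le\kappa\|u\|_{H^2}^2$ thanks to the finite overlap, while in the other factor one writes $\|u\|_{L^2(B_i)}^{\alpha_p r'}\le S_n^{(\alpha_p r'-2)/2}\|u\|_{L^2(B_i)}^2$, which is valid because $p\ge 2$ forces $\alpha_p r'\ge 2$. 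After a routine simplification of the exponents one arrives at
\[
\|u_n\|_p^p\le C\,S_n^{(p-2)/2}\,\|u_n\|_2^{2-\beta_p}\,\|u_n\|_{H^2}^{\beta_p},
\]
and the $H^2$-boundedness of $\{u_n\}$ together with $S_n\to 0$ then forces $\|u_n\|_p\to 0$.

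For the remaining range $p\in(4-4/4^*,4^*)$ the direct estimate breaks down because $\beta_p>2$ makes the discrete H\"older step inapplicable. I would cover this by classical Lebesgue interpolation: pick $p_0\in(2,4-4/4^*]$, where the previous step already gives $\|u_n\|_{p_0}\to 0$, and $p_1\in(p,4^*)$, where $\{u_n\}$ is bounded via \eqref{eq04} and the $H^2$-bound; then $\|u_n\|_p\le\|u_n\|_{p_0}^{1-\theta}\|u_n\|_{p_1}^{\theta}\to 0$ for the appropriate $\theta\in(0,1)$. The main technical obstacle is the bookkeeping in the assembly step: choosing the H\"older exponents so that exactly the power $(p-2)/2$ of $S_n$ appears while the residual $L^2$- and $H^2$-contributions remain bounded, and verifying that the directly treated range together with the interpolated range exhaust $(2,4^*)$.
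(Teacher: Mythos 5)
Your argument is correct and is precisely the adaptation of Lions' lemma (Lemma 1.21 in Willem's \emph{Minimax Theorems}) that the paper itself invokes without proof via the phrase ``by similar arguments as in \cite{M1996}'': the covering by unit balls with bounded overlap, the local interpolation between $L^2$ and the critical embedding $H^2\hookrightarrow L^{4^*}$, the extraction of the factor $S_n^{(p-2)/2}$ for the distinguished exponent range, and the final Lebesgue interpolation to cover all of $(2,4^*)$ all check out, including the exponent bookkeeping. No discrepancy with the paper's (omitted) proof.
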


Define
$$
f(c,\rho):=\frac{1}{2}-\frac{\mu}{q}C_{N,q}^{q}\rho^{\alpha_{0}}c^{\alpha_{1}}-\frac{\mathcal{S}^{4^*}}{4^*}\rho^{\alpha_{2}}, \forall c>0, \rho>0,
$$
where
\[
\alpha_{0}=\frac{(q-2)N}{8}-1\in(-1,0),\ \ \alpha_{1}=\frac{2N-q(N-4)}{8}\in(\frac{4}{N},1),\ \ \alpha_{2}=\frac{4}{N-4}\in(0,4],
\] and $\mathcal{S}$ is the optimal constant such that
\begin{equation}\label{eq05}
\|u\|_{4^*}\leq\mathcal{S}\|\Delta u\|_{2}.
\end{equation}
(See \cite{L1985}). Using \eqref{eq04}-\eqref{eq05}, we get
\begin{equation}\label{07-17-2}
J(u)\geq\|\Delta u\|_{2}^{2}f(c,\|\Delta u\|_{2}^{2}).
\end{equation}
By direct computations, for any $c>0$, the function $h_{c}(\rho):=f(c,\rho)$ has a unique global maximum point
\begin{equation}\label{eq08}
\rho_{c}:=[-\frac{\alpha_{0}}{\alpha_{2}}\frac{\mu C_{N,q}^{q}4^{\ast}}{q\mathcal{S}^{4^*}}]^{\frac{1}{\alpha_{2}-\alpha_{0}}}c^{\frac{\alpha_{1}}{\alpha_{2}-\alpha_{0}}}.
\end{equation}
And the maximum value is
\begin{align*}
  \max\limits_{\rho>0}h_{c}(\rho)=h_c(\rho_c)=\frac{1}{2}-Mc^{\frac{4}{N}},
\end{align*}
where
\begin{equation*}\label{eq07}
M:=\frac{\mu}{q}C_{N,q}^{q}[-\frac{\alpha_{0}}{\alpha_{2}}\frac{\mu C_{N,q}^{q}4^*}{q\mathcal{S}^{4^*}}]^{\frac{\alpha_{0}}{\alpha_{2}-\alpha_{0}}}+\frac{\mathcal{S}^{4^*}}{4^*}[-\frac{\alpha_{0}}{\alpha_{2}}\frac{\mu C_{N,q}^{q}4^*}{q\mathcal{S}^{4^*}}]^{\frac{\alpha_{2}}{\alpha_{2}-\alpha_{0}}}>0.
\end{equation*}
Specially, $f(c_{0},\rho_{c_0})=\max\limits_{\rho>0}h_{c_{0}}(\rho)=0$ for $c_{0}:=(\frac{1}{2M})^{\frac{N}{4}}>0$.

\begin{lemma}\label{compa}
Let $c_{1}>0, \rho_{1}>0$. Then, for any $c_{2}\in(0,c_{1}]$, we have
\begin{equation}\label{07-17-1}
f(c_{2},\rho)\geq f(c_{1},\rho_{1}),~~\forall \rho\in[\frac{c_{2}}{c_{1}}\rho_{1},\rho_{1}].
\end{equation}
\end{lemma}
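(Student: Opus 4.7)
My plan is to write the difference $f(c_2,\rho)-f(c_1,\rho_1)$ as a sum of two terms corresponding to the subcritical and critical monomials in the definition of $f$, and to argue that each term is nonnegative separately on the range $\rho\in[\frac{c_2}{c_1}\rho_1,\rho_1]$. Setting $A:=\frac{\mu}{q}C_{N,q}^{q}$ and $B:=\frac{\mathcal{S}^{4^*}}{4^*}$, this difference becomes
\[
f(c_2,\rho)-f(c_1,\rho_1)=A\bigl(\rho_1^{\alpha_0}c_1^{\alpha_1}-\rho^{\alpha_0}c_2^{\alpha_1}\bigr)+B\bigl(\rho_1^{\alpha_2}-\rho^{\alpha_2}\bigr),
\]
so it suffices to check that each bracket is nonnegative.

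The critical bracket is immediate: since $\alpha_2>0$ and $\rho\le\rho_1$, the map $\rho\mapsto\rho^{\alpha_2}$ is increasing, so $\rho_1^{\alpha_2}\ge\rho^{\alpha_2}$. The subcritical bracket is more delicate because $\alpha_0\in(-1,0)$, which means $\rho\mapsto\rho^{\alpha_0}$ is \emph{decreasing}. I will therefore bound $\rho^{\alpha_0}c_2^{\alpha_1}$ by its value at the left endpoint $\rho=(c_2/c_1)\rho_1$, which gives
\[
\rho^{\alpha_0}c_2^{\alpha_1}\le\bigl((c_2/c_1)\rho_1\bigr)^{\alpha_0}c_2^{\alpha_1}=(c_2/c_1)^{\alpha_0+\alpha_1}\,\rho_1^{\alpha_0}c_1^{\alpha_1}.
\]
Since $c_2/c_1\le 1$, the right-hand side is at most $\rho_1^{\alpha_0}c_1^{\alpha_1}$ precisely when $\alpha_0+\alpha_1\ge 0$.

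The crux of the lemma — really the only place where the hypotheses on $q$ enter — is the algebraic identity
\[
\alpha_0+\alpha_1=\frac{(q-2)N}{8}-1+\frac{2N-q(N-4)}{8}=\frac{q-2}{2}>0,
\]
which holds by the standing assumption $q>2$ and reflects the $L^2$-subcriticality of the $\|u\|_q^q$ term. I do not anticipate any real obstacle beyond noting this exponent identity: once it is in hand both brackets are nonnegative and \eqref{07-17-1} follows on the whole interval $[\frac{c_2}{c_1}\rho_1,\rho_1]$. The proof is essentially a one-line monotonicity argument, but the choice of the lower endpoint $\frac{c_2}{c_1}\rho_1$ is dictated precisely by the scaling that makes $(c_2/c_1)^{\alpha_0+\alpha_1}\le 1$ turn into the desired comparison.
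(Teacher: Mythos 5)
Your proof is correct, and it takes a mildly but genuinely different route from the paper's. The paper only verifies the inequality at the two endpoints of the interval: it uses monotonicity of $f$ in $c$ to get $f(c_2,\rho_1)\ge f(c_1,\rho_1)$, computes $f(c_2,\tfrac{c_2}{c_1}\rho_1)-f(c_1,\rho_1)\ge 0$ via the same identity $\alpha_0+\alpha_1=\tfrac{q-2}{2}$ that you use, and then covers the interior of $[\tfrac{c_2}{c_1}\rho_1,\rho_1]$ by appealing to the previously established fact that $h_{c_2}(\rho)=f(c_2,\rho)$ has a unique global maximum (so its minimum over any interval is attained at an endpoint). You instead prove the inequality pointwise for every $\rho$ in the interval by splitting $f(c_2,\rho)-f(c_1,\rho_1)$ into the subcritical and critical brackets and bounding each by monotonicity: the $\rho^{\alpha_2}$ term at the right endpoint (since $\alpha_2>0$), and the $\rho^{\alpha_0}$ term at the left endpoint (since $\alpha_0<0$), where the scaling $(c_2/c_1)^{\alpha_0+\alpha_1}\le 1$ closes the estimate. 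The algebraic core (the exponent identity and the choice of left endpoint $\tfrac{c_2}{c_1}\rho_1$) is identical in both arguments; what your version buys is that it is self-contained and does not invoke the unimodality of $h_{c_2}$, while the paper's version recycles a structural fact it has already established for other purposes. Both are sound; there is no gap in your argument.
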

\begin{proof}
Since $f(c, \rho)$ is decreasing with respect to $c>0$, we have $f(c_{2},\rho_{1})\geq f(c_1,\rho_{1})$. In addition, by the facts that $\alpha_{0}+\alpha_{1}=\frac{q-2}{2}$, $c_{2}\le c_{1},\ \alpha_{2}\in(0,4]$ and $2<q<\frac{8}{N}+2$, we obtain
\begin{align*}
f(c_{2},\frac{c_{2}}{c_{1}}\rho_{1})-f(c_{1},\rho_{1})=\frac{\mu}{q}C_{N,q}^{q}\rho_{1}^{\alpha_{0}}c_{1}^{\alpha_{1}}(1-(\frac{c_{2}}{c_{1}})^{\frac{q-2}{2}})+\frac{\mathcal{S}^{4^*}}{4^{\ast}}(1-(\frac{c_{2}}{c_{1}})^{\alpha_{2}})\ge0.
\end{align*}
Since the function $h_{c_{2}}(\rho)$ has a unique global maximum, we get (\ref{07-17-1}) holds.
\end{proof}
Let $u\in S(c)$ be arbitrary but fixed. Set $u_{s}(x):=s^{\frac{N}{4}}u(s^{\frac{1}{2}}x), \forall s>0$.
Then $u_{s}\in S(c)$ for any $s>0$. Set $\rho_0:=\rho_{c_0}>0$.
Define $B_{\rho_{0}}:=\{u\in H^{2}:\|\Delta u\|_{2}^{2}<\rho_{0}\}$. Clearly, $\Lambda_{\rho_0}(c):=S(c)\cap B_{\rho_{0}}$.

\begin{lemma}\label{negative gr st}
For any $c\in(0,c_{0})$, we have
\begin{equation}\label{07-17-3}
m(c):=\inf\limits_{u\in \Lambda_{\rho_0}(c)}J(u)<0<\inf\limits_{u\in\partial \Lambda_{\rho_0}(c)}J(u).
\end{equation}
\end{lemma}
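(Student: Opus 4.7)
The plan is to derive \eqref{07-17-3} directly from the pointwise bound \eqref{07-17-2}, namely $J(u) \geq \|\Delta u\|_2^2\, f(c, \|\Delta u\|_2^2)$. The right-hand inequality reduces to establishing $f(c, \rho_0) > 0$ whenever $c < c_0$, while the left-hand inequality is established by exhibiting one element of $\Lambda_{\rho_0}(c)$ with negative energy, constructed via the $L^2$-preserving dilation $u_s(x) = s^{N/4} u(s^{1/2} x)$ already introduced before the lemma.

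For the right inequality, observe that every $u \in \partial \Lambda_{\rho_0}(c)$ has $\|\Delta u\|_2^2 = \rho_0 = \rho_{c_0}$, so \eqref{07-17-2} yields $J(u) \geq \rho_0\, f(c, \rho_0)$. Only the subcritical term of $f(c, \rho)$ depends on $c$, through the strictly positive exponent $\alpha_1$, so the map $c \mapsto f(c, \rho_0)$ is strictly decreasing. By the definition of $c_0$ we have $f(c_0, \rho_0) = h_{c_0}(\rho_{c_0}) = 0$, and strict monotonicity then forces $f(c, \rho_0) > 0$ for every $c \in (0, c_0)$. Consequently $\inf_{u \in \partial \Lambda_{\rho_0}(c)} J(u) \geq \rho_0\, f(c, \rho_0) > 0$, uniformly in $u$.

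For the left inequality, fix any $u \in S(c)$. A direct change of variables shows $u_s \in S(c)$ for all $s > 0$ and
\[
J(u_s) = \frac{s^{2}}{2}\|\Delta u\|_{2}^{2} - \frac{\mu\, s^{(q-2)N/4}}{q}\|u\|_{q}^{q} - \frac{s^{4^*}}{4^*}\|u\|_{4^*}^{4^*}.
\]
The hypothesis $2 < q < 2 + \frac{8}{N}$ translates into $(q-2)N/4 \in (0, 2) \subset (0, 4^*)$, so as $s \to 0^+$ the middle term dominates the other two and $J(u_s) < 0$ for all sufficiently small $s$. Since simultaneously $\|\Delta u_s\|_2^2 = s^2 \|\Delta u\|_2^2 \to 0 < \rho_0$, such $u_s$ lies in $\Lambda_{\rho_0}(c)$, hence $m(c) \leq J(u_s) < 0$. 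The only delicate point in the entire argument is the upgrade from the weak bound $f(c, \rho_0) \geq 0$ (which also follows from Lemma \ref{compa}) to the strict inequality needed to get a uniform positive lower bound on $\partial \Lambda_{\rho_0}(c)$; no compactness or minimizing-sequence arguments are required.
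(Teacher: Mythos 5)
Your proposal is correct and follows essentially the same route as the paper: the positivity on $\partial\Lambda_{\rho_0}(c)$ comes from \eqref{07-17-2} together with $f(c_0,\rho_0)=0$ and the strict monotonicity of $c\mapsto f(c,\rho_0)$ (which the paper invokes implicitly as ``by the choice of $\rho_0$''), and the negativity of $m(c)$ comes from the same mass-preserving dilation $u_s$ with the subcritical exponent $N(q-2)/4\in(0,2)$ forcing $J(u_s)<0$ and $\|\Delta u_s\|_2^2<\rho_0$ for small $s$.
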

\begin{proof}
For the first part of (\ref{07-17-3}), we define
$$\psi_{u}(s):=J(u_{s})=\frac{s^{2}}{2}\|\Delta u\|_{2}^{2}-\frac{\mu}{q}s^{\frac{N(q-2)}{4}}\|u\|_{q}^{q}-\frac{s^{4^*}}{4^*}\|u\|_{4^*}^{4^*}.$$
Since $\frac{N(q-2)}{4}\in(0,2)$, we deduce that $\psi_{u}(s)\rightarrow0^{-}$ as $s\rightarrow0^+$. Then, there exists $s_{0}$ small enough such that $\|\Delta u_{s_{0}}\|_{2}^{2}=s_{0}^{2}\|\Delta u\|_{2}^{2}<\rho_{0}$ and $J(u_{s_{0}})=\psi_{u}(s_{0})<0$. Hence $m(c)<0$.
	
For any $u\in \partial \Lambda_{\rho_0}(c)$, we have $\|\Delta u\|_{2}^{2}=\rho_{0}$. By the choice of $\rho_0$, we get $f(c_{0},\rho_{0})=0$ and $f(c,\rho_{0})>0$ for all $c\in(0,c_{0})$. Then, by (\ref{07-17-2}) it follows that
\begin{equation*}
J(u)\geq\|\Delta u\|_{2}^{2}f(\|u\|_{2}^{2},\|\Delta u\|_{2}^{2})=\rho_{0}f(c,\rho_{0})>0.
\end{equation*}
\end{proof}

\begin{lemma}\label{decre}
The function $c\mapsto m(c)$ is decreasing in $(0, c_0)$.
\end{lemma}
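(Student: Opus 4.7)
The plan is to prove $m(c_1)<m(c_2)$ whenever $0<c_2<c_1<c_0$ by exhibiting an explicit two-parameter rescaling from $\Lambda_{\rho_0}(c_2)$ into $\Lambda_{\rho_0}(c_1)$ that strictly decreases $J$. Given $u\in\Lambda_{\rho_0}(c_2)$, I would set
\[
v(x):=\lambda\,u(\sigma x),\qquad\sigma:=(c_2/c_1)^{1/4},\qquad\lambda:=(c_1/c_2)^{(4-N)/8}.
\]
The two exponents are chosen precisely so that $\|v\|_2^2=\lambda^2\sigma^{-N}c_2=c_1$ and $\|\Delta v\|_2^2=\lambda^2\sigma^{4-N}\|\Delta u\|_2^2=\|\Delta u\|_2^2<\rho_0$, hence $v\in\Lambda_{\rho_0}(c_1)$. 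The same rescaling also produces $\lambda^{4^*}\sigma^{-N}=1$ (the invariance of the critical $L^{4^*}$-norm under this combined scaling) and $\lambda^q\sigma^{-N}=(c_1/c_2)^{\alpha_1}$ with exactly the exponent $\alpha_1=(2N-q(N-4))/8>0$ that already appears before Lemma \ref{compa}.

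Plugging these into $J$ then yields
\[
J(v)=J(u)-\frac{\mu}{q}\big[(c_1/c_2)^{\alpha_1}-1\big]\|u\|_q^q<J(u),
\]
since $c_1>c_2$, $\alpha_1>0$, and $\|u\|_q>0$ for any nontrivial $u$. Taking the infimum over $u\in\Lambda_{\rho_0}(c_2)$ gives $m(c_1)\le m(c_2)$. To upgrade this to strict inequality, I would apply the construction to a minimizing sequence $\{u_n\}$ for $m(c_2)$; the pointwise gap $J(u_n)-J(v_n)=\frac{\mu}{q}[(c_1/c_2)^{\alpha_1}-1]\|u_n\|_q^q$ transfers to the limit provided $\liminf_n\|u_n\|_q>0$. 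If instead $\|u_n\|_q\to 0$ along a subsequence, the critical Sobolev inequality \eqref{eq05} would yield $J(u_n)\ge\tfrac{1}{2}\|\Delta u_n\|_2^2-\tfrac{\mathcal{S}^{4^*}}{4^*}\|\Delta u_n\|_2^{4^*}+o(1)\ge 0+o(1)$, where the last bound uses that $f(c_0,\rho_0)=0$ decomposes as a sum of two strictly positive terms, forcing $(\mathcal{S}^{4^*}/4^*)\rho_0^{\alpha_2}<1/2$, so that the preceding bracket is nonnegative for $\|\Delta u_n\|_2^2\in(0,\rho_0)$. This would contradict $J(u_n)\to m(c_2)<0$ from Lemma \ref{negative gr st}.

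The main obstacle is the choice of the rescaling. A pure $L^2$-multiplication $u\mapsto\sqrt{c_1/c_2}\,u$ shifts the mass correctly but can push $\|\Delta v\|_2^2$ above $\rho_0$, while a pure dilation $u\mapsto u(\sigma\,\cdot)$ leaves the mass unchanged. The family $u\mapsto\lambda u(\sigma\,\cdot)$ provides exactly enough freedom: one parameter fixes the new mass and the other restores the biharmonic term; the fortunate cancellation $\lambda^{4^*}\sigma^{-N}=1$ then neutralizes the critical contribution, so the entire energy drop is carried by the subcritical $L^q$-term with the explicit exponent $\alpha_1$.
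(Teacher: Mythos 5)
Your rescaling $v=\lambda u(\sigma\,\cdot)$ with $\lambda^2\sigma^{-N}$ adjusting the mass and $\lambda^2\sigma^{4-N}=1$, $\lambda^{4^*}\sigma^{-N}=1$ is exactly the map used in the paper's proof (there written as $v(x)=(c_1/c_2)^{\frac{N-4}{8}}u((c_1/c_2)^{\frac14}x)$ and applied to an $\epsilon$-almost minimizer of the smaller mass), so the core argument is the same and correct. The only difference is your extra step upgrading to strict monotonicity by bounding $\|u_n\|_q$ away from zero along a minimizing sequence via \eqref{eq05} and the fact that $f(c_0,\rho_0)=0$ forces $\frac{\mathcal{S}^{4^*}}{4^*}\rho_0^{\alpha_2}<\frac12$; this is also correct (it is the same non-vanishing estimate the paper invokes in \eqref{07-18-2}) and in fact yields the strict decrease of $m$, whereas the paper's $\epsilon$-argument as written only gives that $m$ is non-increasing.
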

\begin{proof}
Assume that $c_1, c_2\in (0, c_0)$ and $c_2>c_1$. For any $\epsilon>0$ small enough, there exists $u\in \Lambda_{\rho_0}(c_1)$ such that
\begin{equation}\label{07-18-1}
J(u)\le m(c_1)+\epsilon~~\mbox{and}~~J(u)<0.
\end{equation}
Define $v(x):=(\frac{c_1}{c_2})^{\frac{N-4}{8}}u((\frac{c_1}{c_2})^{\frac{1}{4}}x)$, $\forall x\in \mathbb{R}^N$. Clearly, $v\in \Lambda_{\rho_0}(c_2)$. Then
\begin{eqnarray*}
m(c_2)\le J(v)=\frac{1}{2}\|\Delta u\|_2^2-\frac{\mu}{q}(\frac{c_1}{c_2})^{\frac{q(N-4)-2N}{8}}\|u\|_{q}^{q}-\frac{1}{4^*}\|u\|_{4^*}^{4^*}<J(u).
\end{eqnarray*}
Hence, together with (\ref{07-18-1}) we get $m(c_2)<m(c_1)+\epsilon$. Since $\epsilon$ is arbitrary, the conclusion follows.
\end{proof}

\begin{lemma}\label{ckj}
\begin{description}
\item[$(i)$]
The function $c\in(0, c_0)\mapsto m(c)$ is continuous.

\item[$(ii)$]Let $c\in(0,c_{0})$. Then $m(c)\leq m(\alpha)+m(c-\alpha)$ for all $\alpha\in(0,c)$. The inequality is strict if $m(\alpha)$ or $m(c-\alpha)$ is reached.
\end{description}
\end{lemma}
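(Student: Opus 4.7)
For part (i), my plan is to transport near-minimizers between mass levels using the mass-rescaling from the proof of Lemma~\ref{decre}. Given $c\in(0,c_0)$ and a sequence $c_n\to c$ in $(0,c_0)$, for $\epsilon>0$ I pick $u\in\Lambda_{\rho_0}(c)$ with $J(u)\le m(c)+\epsilon$ and $J(u)<0$, and set
\[
v_n(x):=\Bigl(\tfrac{c}{c_n}\Bigr)^{\!(N-4)/8}\,u\!\left(\bigl(\tfrac{c}{c_n}\bigr)^{1/4}x\right)\in\Lambda_{\rho_0}(c_n).
\]
A direct computation shows this rescaling preserves $\|\Delta\cdot\|_2^2$ and $\|\cdot\|_{4^*}^{4^*}$ while modifying only $\|\cdot\|_q^q$ by the continuous factor $(c/c_n)^{-\alpha_1}\to1$, so $J(v_n)\to J(u)$ and hence $m(c_n)\le J(v_n)\to J(u)\le m(c)+\epsilon$, giving $\limsup_n m(c_n)\le m(c)$. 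The reverse bound $\liminf_n m(c_n)\ge m(c)$ is symmetric: any sequence $u_n\in\Lambda_{\rho_0}(c_n)$ with $J(u_n)\le m(c_n)+1/n$ is bounded in $H^2$ by $\|\Delta u_n\|_2^2<\rho_0$ and $\|u_n\|_2^2=c_n$, so $\|u_n\|_q$ is uniformly bounded via \eqref{eq04}, and the analogous rescaling back to $\Lambda_{\rho_0}(c)$ produces an $o(1)$ change in $J$.

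For part (ii), my plan is a translation-and-concatenation argument for the weak inequality, combined with the mass-rescaling of Lemma~\ref{decre} to upgrade it to strict when a minimizer exists. For $\alpha\in(0,c)$ and $\epsilon>0$, pick near-minimizers $u\in\Lambda_{\rho_0}(\alpha)$ and $v\in\Lambda_{\rho_0}(c-\alpha)$ with $J(u)\le m(\alpha)+\epsilon<0$, $J(v)\le m(c-\alpha)+\epsilon<0$; by density in $H^2$ we may assume they have compact support. For $|R|$ large the summands of $w_R:=u+v(\cdot-Re_1)$ are disjoint, so $\|w_R\|_2^2=c$ and $J(w_R)=J(u)+J(v)\le m(\alpha)+m(c-\alpha)+2\epsilon$. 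Provided $\|\Delta w_R\|_2^2=\|\Delta u\|_2^2+\|\Delta v\|_2^2<\rho_0$, we have $w_R\in\Lambda_{\rho_0}(c)$ and $\epsilon\to 0$ closes the bound. For the strict inequality when $m(\alpha)$ is attained at some $u_\ast\in\Lambda_{\rho_0}(\alpha)$, the identity $J(u_\ast)=m(\alpha)<0$ forces $\|u_\ast\|_q^q>0$; the rescaling of Lemma~\ref{decre} applied to $u_\ast$ then delivers $\tilde u\in\Lambda_{\rho_0}(c)$ with energy gap $\tfrac{\mu}{q}[(c/\alpha)^{\alpha_1}-1]\|u_\ast\|_q^q>0$ below $m(\alpha)$, and comparing with the weak inequality yields strictness.

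The main obstacle I anticipate in (ii) is ensuring $\|\Delta u\|_2^2+\|\Delta v\|_2^2<\rho_0$ in the concatenation step, since near-minimizers can carry $\|\Delta\cdot\|_2^2$ close to the upper threshold identified in the proof of Lemma~\ref{negative gr st}, so the sum may overshoot $\rho_0$. To bypass this I would compose $w_R$ with an $L^2$-preserving dilation $(w_R)_s$ with $s$ slightly below one, chosen to bring $\|\Delta(w_R)_s\|_2^2$ below $\rho_0$; the induced energy cost is controlled through the subcritical scaling $\psi_{w_R}(s)=J((w_R)_s)$ analyzed in the proof of Lemma~\ref{negative gr st}, which tends to $J(w_R)$ continuously. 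Combined with the continuity from part (i) and the monotonicity of Lemma~\ref{decre}, this recovers $m(c)\le m(\alpha)+m(c-\alpha)$ uniformly in $\alpha\in(0,c)$.
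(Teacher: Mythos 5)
Your part (i) is fine: transporting near-minimizers by the mass-changing dilation $u\mapsto (c/c_n)^{(N-4)/8}u((c/c_n)^{1/4}\cdot)$, which preserves $\|\Delta\cdot\|_2^2$ and perturbs only the $q$-term by a factor tending to $1$, is a legitimate variant of the paper's amplitude rescaling $\sqrt{c/c_n}\,u$, and your uniform bound on $\|u_n\|_q^q$ makes the reverse inequality work. The problems are in part (ii). First, in the concatenation step, the obstacle you flag ($\|\Delta u\|_2^2+\|\Delta v\|_2^2$ possibly exceeding $\rho_0$) is real as you set things up, but your proposed repair does not close it: if the sum is near $2\rho_0$, the $L^2$-preserving dilation needed to bring $\|\Delta (w_R)_s\|_2^2$ below $\rho_0$ requires $s$ bounded away from $1$ (roughly $s^2<1/2$), and then $J((w_R)_s)-J(w_R)$ is an $O(1)$ quantity of uncontrolled sign, not an error absorbed by $\epsilon$. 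The fact that $\psi_{w_R}$ is continuous at $s=1$ is irrelevant when $s$ cannot be taken close to $1$. The correct way out is the quantitative localization the paper extracts from Lemma~\ref{compa} together with \eqref{07-17-2}: any $u\in\Lambda_{\rho_0}(\alpha)$ with $J(u)<0$ automatically satisfies $\|\Delta u\|_2^2<\frac{\alpha}{c_0}\rho_0$ (since $f(\alpha,\rho)\ge f(c_0,\rho_0)=0$ on $[\frac{\alpha}{c_0}\rho_0,\rho_0]$), so the two Laplacian norms sum to less than $\frac{c}{c_0}\rho_0<\rho_0$ and no dilation is needed. Without this (or an equivalent) input, your weak inequality is not proved.

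Second, your strictness argument fails. Rescaling a minimizer $u_\ast$ of $m(\alpha)$ up to mass $c$ gives $m(c)\le m(\alpha)-\frac{\mu}{q}\bigl[(c/\alpha)^{\alpha_1}-1\bigr]\|u_\ast\|_q^q$, i.e.\ $m(c)<m(\alpha)$; but since $m(c-\alpha)<0$ by Lemma~\ref{negative gr st}, this does not imply $m(c)<m(\alpha)+m(c-\alpha)$ unless your gap $\delta$ exceeds $|m(c-\alpha)|$, which nothing guarantees. The concatenation construction itself can never yield strictness either, since it only produces upper bounds of the form $m(\alpha)+m(c-\alpha)+\epsilon$. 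The paper's route avoids both issues: it proves the sub-homogeneity $m(\theta\alpha)\le\theta m(\alpha)$ for $\theta\in(1,c/\alpha]$ by the amplitude scaling $v=\sqrt{\theta}u$ (admissible precisely because of the localization $\|\Delta u\|_2^2<\frac{\alpha}{c}\rho_0$ noted above), with strict inequality when $m(\alpha)$ is attained (take $\epsilon=0$), and then deduces $m(c)\le m(c-\alpha)+m(\alpha)$, strict if either level is reached, from the identity $m(c)=\frac{c-\alpha}{c}m(c)+\frac{\alpha}{c}m(c)$. You would need to replace your strictness step by an argument of this sub-homogeneity type.
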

\begin{proof}
$(i)\ $	For any $c\in(0,c_{0})$, we take $\{c_{n}\}\subset(0,c_{0})$ be such that $c_{n}\rightarrow c$.
We first assume that $c_n>c$. By Lemma \ref{decre} we get $m(c_n)\le m(c)$. On the other hand, by Lemma \ref{negative gr st}, for any $\epsilon>0$ small enough, there exists $u_{n}\in \Lambda_{\rho_0}(c_{n})$ such that
\begin{equation}\label{eq15}
J(u_{n})\leq m(c_{n})+\epsilon\quad\mathrm{and}\quad J(u_{n})<0.
\end{equation}
Take $z_{n}=\sqrt{\frac{c}{c_{n}}}u_{n}$. Clearly, $z_{n}\in \Lambda_{\rho_0}(c)$. Then, noting that $J(z_{n})-J(u_{n})\to 0$, it follows that
$$
m(c)\leq J(z_{n})=J(u_{n})+(J(z_{n})-J(u_{n}))=J(u_{n})+o_{n}(1).
$$
By (\ref{eq15}) we get $m(c)\leq m(c_{n})+\epsilon+o_{n}(1)$. Hence $m(c_n)\to m(c)$ as $c_n\to c^+$.

In what follows, we consider the case $c_n<c$. By Lemma \ref{decre} again we get $m(c)\le m(c_n)$. For the opposite side, for any $\epsilon>0$ small, we
let $u\in \Lambda_{\rho_0}(c)$ be such that
$$J(u)\leq m(c)+\epsilon\quad\mathrm{and}\quad J(u)<0.$$
Let $u_{n}:=\sqrt{\frac{c_{n}}{c}}u$. Clearly, $u_{n}\in \Lambda_{\rho_0}(c_{n})$ for $n$ large enough, and $J(u_{n})\rightarrow J(u)$. Hence,
$$
m(c_{n})\leq J(u_{n})=J(u)+(J(u_{n})-J(u))\leq m(c)+\epsilon+o_{n}(1).
$$
Since $\epsilon>0$ is arbitrary, we obtain $m(c_n)\le m(c)+o_{n}(1)$. Thus $m(c_n)\to m(c)$ as $c_n\to c^-$.

$(ii)$  For any $\alpha\in (0, c)$, we shall prove that $m(\theta\alpha)\leq\theta m(\alpha), \forall \theta\in(1,\frac{c}{\alpha}]$.
In fact, by Lemma \ref{negative gr st}, for any $\epsilon>0$ small, there exists $u\in \Lambda_{\rho_0}(\alpha)$ such that
\begin{equation}\label{eq18}
J(u)\leq m(\alpha)+\epsilon\quad\mathrm{and}\quad J(u)<0.
\end{equation}
 Using Lemma \ref{compa}, we have $f(\alpha,\rho)\geq f(c_{0},\rho_{0})=0$ for any $\rho\in[\frac{\alpha}{c}\rho_{0},\rho_{0}]$. Then, by (\ref{07-17-2}) and  (\ref{eq18}) it follows that $\|\Delta u\|_{2}^{2}<\frac{\alpha}{c}\rho_{0}$. Take $v=\sqrt{\theta}u$ with $\theta\in(1,\frac{c}{\alpha}]$. Clearly, $v\in \Lambda_{\rho_0}(\theta\alpha)$. Then, in view of (\ref{eq18}),
\begin{align*}
m(\theta\alpha)\leq J(v)=\frac{1}{2}\theta\|\Delta u\|_{2}^{2}-\frac{\mu}{q}\theta^{\frac{q}{2}}\|u\|^{q}_{q}-\frac{1}{4^*}\theta^{\frac{4^*}{2}}\|u\|^{4^*}_{4^*}
<\theta J(u)\leq\theta(m(\alpha)+\epsilon).
\end{align*}
Since $\epsilon$ is arbitrary, we get $m(\theta\alpha)\leq\theta m(\alpha)$, which implies that
$$
m(c)=\frac{c-\alpha}{c}m(c)+\frac{\alpha}{c}m(c)=\frac{c-\alpha}{c}m(\frac{c}{c-\alpha}(c-\alpha))+\frac{\alpha}{c}m(\frac{c}{\alpha}\alpha)\leq m(c-\alpha)+m(\alpha).
$$
If $m(\alpha)$ is reached, we can choose $\epsilon=0$. Then the strict inequality follows.\end{proof}

\section{Proof of Theorem \ref{Thm1.1}}\label{ground}

In this section, we will give the proof of Theorem \ref{Thm1.1}.
Set $\mathcal{K}_{c}:=\{u\in\Lambda_{\rho_0}(c):J(u)=m(c)\}$.	

\begin{theorem}\label{com}
For any $c\in(0,c_{0})$, $\mathcal{K}_{c}\neq\O$. Furthermore, the set $\mathcal{K}_{c}$ is compact in $H^{2}$, up to translation.
\end{theorem}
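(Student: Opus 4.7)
The plan is to show that every minimizing sequence for $m(c)$ admits, after translation and extraction, a strongly convergent subsequence in $H^2$. This yields non-emptiness of $\mathcal{K}_c$ at once; applied to any sequence in $\mathcal{K}_c$ (which is trivially minimizing), the same argument delivers compactness up to translation.

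\textbf{Step 1 (boundedness and no vanishing).} Fix $\{u_n\}\subset\Lambda_{\rho_0}(c)$ with $J(u_n)\to m(c)<0$; the constraints $\|u_n\|_2^2=c$ and $\|\Delta u_n\|_2^2<\rho_0$ give a uniform $H^2$-bound. Suppose $\sup_{y\in\mathbb{R}^N}\int_{B(y,1)}|u_n|^2\,dx\to 0$. Then Lemma \ref{Lions} gives $\|u_n\|_q\to 0$, and by \eqref{eq05},
$$
J(u_n)\ \geq\ \|\Delta u_n\|_2^2\Bigl(\tfrac{1}{2}-\tfrac{\mathcal{S}^{4^*}}{4^*}\|\Delta u_n\|_2^{4^*-2}\Bigr)+o(1).
$$
The identity $f(c_0,\rho_0)=0$ forces $\tfrac{\mathcal{S}^{4^*}}{4^*}\rho_0^{\alpha_2}<\tfrac{1}{2}$; since $\|\Delta u_n\|_2^{4^*-2}=(\|\Delta u_n\|_2^2)^{\alpha_2}<\rho_0^{\alpha_2}$, this yields $J(u_n)\geq o(1)$, contradicting $m(c)<0$. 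Hence there exist $y_n\in\mathbb{R}^N$ with $\tilde{u}_n:=u_n(\cdot+y_n)\rightharpoonup u\neq 0$ weakly in $H^2$.

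\textbf{Step 2 (mass conservation).} Set $v_n:=\tilde{u}_n-u$ and $\alpha:=\|u\|_2^2\in(0,c]$. Brezis-Lieb splits the $L^r$ norms ($r=2,q,4^*$) and weak convergence splits $\|\Delta\cdot\|_2^2$, giving $J(\tilde{u}_n)=J(u)+J(v_n)+o(1)$. If $\|\Delta u\|_2^2=\rho_0$, then $\|\Delta v_n\|_2\to 0$, hence $\|v_n\|_q,\|v_n\|_{4^*}\to 0$ and $J(u)=m(c)<0$, contradicting $J(u)>0$ from Lemma \ref{negative gr st} applied with mass $\alpha\in(0,c_0)$. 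So $u\in\Lambda_{\rho_0}(\alpha)$ and $J(u)\geq m(\alpha)$. Suppose $\alpha<c$. The rescaling $w_n:=\sqrt{(c-\alpha)/\|v_n\|_2^2}\,v_n$ lies in $\Lambda_{\rho_0}(c-\alpha)$ for large $n$ with $J(w_n)=J(v_n)+o(1)$, so $\liminf J(v_n)\geq m(c-\alpha)$. Adding gives $m(c)\geq m(\alpha)+m(c-\alpha)$; by Lemma \ref{ckj}$(ii)$ this must be an equality with $J(u)=m(\alpha)$, but then $m(\alpha)$ is attained and the strict part of Lemma \ref{ckj}$(ii)$ yields $m(c)<m(\alpha)+m(c-\alpha)$, a contradiction. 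Therefore $\alpha=c$.

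\textbf{Step 3 (strong convergence; the main obstacle).} From $\|u\|_2^2=c$ and weak $L^2$-convergence, $\tilde{u}_n\to u$ in $L^2$; interpolation $\|v_n\|_q\leq\|v_n\|_2^\theta\|v_n\|_{4^*}^{1-\theta}$ together with the $H^2$-bound then gives $\tilde{u}_n\to u$ in $L^q$. Setting $A:=\lim\|\Delta v_n\|_2^2\in[0,\rho_0]$, passing to the limit in $J(\tilde{u}_n)=J(u)+\tfrac{1}{2}\|\Delta v_n\|_2^2-\tfrac{1}{4^*}\|v_n\|_{4^*}^{4^*}+o(1)$ and using $J(u)\geq m(c)$ produces
$$
\tfrac{1}{2}A\ \leq\ \tfrac{1}{4^*}\limsup\|v_n\|_{4^*}^{4^*}\ \leq\ \tfrac{\mathcal{S}^{4^*}}{4^*}A^{4^*/2}.
$$
If $A>0$, this forces $A\geq(4^*/(2\mathcal{S}^{4^*}))^{1/\alpha_2}$, contradicting $A\leq\rho_0<(4^*/(2\mathcal{S}^{4^*}))^{1/\alpha_2}$. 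Hence $A=0$, $\tilde{u}_n\to u$ in $H^2$, and $J(u)=m(c)$. The principal difficulty lies exactly here: excluding an Aubin-Talenti type bubble escaping at the critical exponent within the admissible Dirichlet window. The strict upper bound on $\rho_0$ encoded in the equation $f(c_0,\rho_0)=0$ is designed precisely to rule this out.
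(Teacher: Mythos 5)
Your proposal is correct and takes essentially the same route as the paper: Lions' lemma combined with the identity $f(c_0,\rho_0)=0$ to rule out vanishing, a Brezis--Lieb/translation splitting, the strict subadditivity of Lemma \ref{ckj}$(ii)$ to force $\|u\|_2^2=c$, and the strict bound $\frac{\mathcal{S}^{4^*}}{4^*}\rho_0^{\alpha_2}<\frac{1}{2}$ to eliminate the residual Dirichlet energy $A$. Your small variations --- rescaling $v_n$ to exact mass $c-\alpha$ instead of invoking the continuity of $m$, and explicitly excluding the boundary case $\|\Delta u\|_2^2=\rho_0$ via Lemma \ref{negative gr st} --- are sound and do not alter the argument.
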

\begin{proof}
Fix $c\in(0,c_{0})$. Let $\{u_{n}\}\subset B_{\rho_{0}}$ be such that $\|u_{n}\|_{2}^{2}\rightarrow c$ and $J(u_{n})\rightarrow m(c)$. Then
$J(u_n)=m(c)+o_n(1)$ and $J(u_n)<0$ for $n$ large enough. Clearly, $\{u_{n}\}$ is bounded in $H^2$. We claim that $\{u_{n}\}$ is non-vanishing. In fact, if not, we may apply Lemma \ref{Lions} to get $\|u_n\|_q\to0$. Then, by (\ref{eq04}), (\ref{eq05}) and $f(c_0, \rho_0)=0$ we get, for $n$ large enough,
\begin{eqnarray}\label{07-18-2}
0>J(u_{n})\geq\|\Delta u_{n}\|_{2}^{2}(\frac{1}{2}-\frac{\mathcal{S}^{4^{\ast}}}{4^{\ast}}\rho_0^{\alpha_{2}})+o_n(1)=\frac{\mu}{q}C_{N,q}^{q}\rho_{0}^{\alpha_{0}}c_{0}^{\alpha_{1}}\|\Delta u_{n}\|_{2}^{2}+o_n(1)>0,
\end{eqnarray}
which gives a contradiction. Then there exists a sequence $\{x_{n}\}\subset\mathbb{R}^{N}$ such that $\varliminf\limits_{n\to+\infty}\int_{B(x_{n},2)}|u_{n}|^{2}dx>0.$ Hence, up to a sequence,
 there exists a sequence $\{y_{n}\}\subset\mathbb{R}^{N}$ and $u_c \in H^2\setminus \{0\}$ such that $u_{n}(\cdot-y_{n})\rightharpoonup u_{c}$ in $H^{2}.$

Define $w_{n}(x):=u_{n}(x-y_{n})-u_{c}(x)$ for $x\in \mathbb{R}^N$. It is not difficult to see that
\begin{equation}\label{eq21}
\|w_{n}\|_{2}^{2}=\|u_{n}\|_{2}^{2}-	\|u_{c}\|_{2}^{2}+o_{n}(1),
\end{equation}
\begin{equation}\label{eq22}
\|\Delta w_{n}\|_{2}^{2}=\|\Delta u_{n}\|_{2}^{2}-\|\Delta u_{c}\|_{2}^{2}+o_{n}(1).
\end{equation}
Then, by the splitting properties of Brezis-Lieb and the translational invariance, we obtain
\begin{equation}\label{eq23}
J(u_{n})=J(u_{n}(x-y_{n}))=J(w_{n})+J(u_{c})+o_{n}(1).	
\end{equation}
Now, we claim that $\|w_{n}\|_{2}^{2}\rightarrow0$.	
Denote $\overline{c}=\|u_{c}\|_{2}^{2}>0$. We assume by contradiction that $\overline{c}<c$. In view of \eqref{eq21} and \eqref{eq22}, for $n$ large enough, we have $w_{n}\in\Lambda_{\rho_0}(\|w_{n}\|_{2}^{2})$. Then, using \eqref{eq23}, we get
$$m(c)=J(w_{n})+J(u_{c})+o_{n}(1)\geq m(\|w_{n}\|_{2}^{2})+J(u_{c})+o_{n}(1).$$
Due to \eqref{eq22}, we have $u_{c}\in \Lambda_{\rho_0}(\overline{c})$. Thus, using Lemma \ref{ckj} (i) and \eqref{eq21}, we deduce that
\begin{equation}\label{eq25}
m(c)\geq m(c-\overline{c})+J(u_{c})\ge m(c-\overline{c})+m(\overline{c}).	
\end{equation}
If $J(u_{c})>m(\overline{c})$, then it follows from Lemma \ref{ckj} (ii) and \eqref{eq25} that
$$m(c)>m(c-\overline{c})+m(\overline{c})\geq m(c-\overline{c}+\overline{c})=m(c),$$
which produces a contradiction. Hence, $J(u_{c})=m(\overline{c})$. Since $m(\overline{c})$ is reached, by \eqref{eq25} and the strict inequality in Lemma \ref{ckj} (ii), we also obtain a contradiction. Thus, the claim holds and then $\|u_{c}\|_{2}^{2}=c$.

In the following, we prove that $\|\Delta w_{n}\|_{2}^{2}\rightarrow0$. Using \eqref{eq22} we have $u_{c}\in \Lambda_{\rho_0}(c)$, which implies that $J(u_{c})\geq m(c)$. Then \eqref{eq23} and $J(u_n)\to m(c)$ imply that $J(w_{n})\leq o_{n}(1)$.
However, by \eqref{eq22} again it follows that $\{w_{n}\}\subset B_{\rho_{0}}$. Then, by $\|w_{n}\|_{2}^{2}\rightarrow0$ we get $\|w_{n}\|_{q}^{q}\rightarrow0$. Therefore, using (\ref{07-18-2}) we obtain
$\|\Delta w_{n}\|_{2}^{2}\rightarrow0$. This together with $\|w_n\|_2^2\to0$ shows that $w_{n}(x)\rightarrow0$ in $H^{2}$ and completes the proof.
\end{proof}

Define the Pohozaev set by
$$
Q_{c}:=\{u\in S(c): Q(u)=0\},
$$
where $Q(u):=\|\Delta u\|_{2}^{2}-\frac{\mu N(q-2)}{4q}\|u\|_{q}^{q}-\|u\|_{4^*}^{4^*}.$
\begin{theorem}\label{pro}
For any $c\in(0,c_{0})$, if $m(c)$ is reached, then any ground state is contained in $\Lambda_{\rho_0}(c)$.
\end{theorem}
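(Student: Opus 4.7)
The plan rests on three ingredients: (a) the Pohozaev identity $Q(u_c)=0$ satisfied by every critical point of $J|_{S(c)}$, which follows from differentiating $\psi_{u_c}(s):=J((u_c)_s)$ at $s=1$ and using the $L^2$-preserving nature of the scaling $u\mapsto u_s$ introduced in Lemma \ref{negative gr st}; (b) the hypothesis that $m(c)$ is reached, which produces a minimizer $v\in\Lambda_{\rho_0}(c)$ of $J$ over a set open in $S(c)$, hence a critical point of $J|_{S(c)}$, so that $J(u_c)\le J(v)=m(c)<0$ for any ground state $u_c$; (c) the positivity $J>0$ on $\partial\Lambda_{\rho_0}(c)$ from Lemma \ref{negative gr st}.

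I would then argue by contradiction, assuming $\|\Delta u_c\|_2^{2}\ge\rho_0$. The equality case is immediate: $u_c\in\partial\Lambda_{\rho_0}(c)$ would give $J(u_c)>0$ by (c), contradicting $J(u_c)<0$. For the strict case $\|\Delta u_c\|_2^{2}>\rho_0$ I would set $s^*:=\sqrt{\rho_0/\|\Delta u_c\|_2^{2}}\in(0,1)$, so that $(u_c)_{s^*}\in\partial\Lambda_{\rho_0}(c)$ and $\psi_{u_c}(s^*)=J((u_c)_{s^*})>0$ by (c).

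The decisive step is then a shape analysis of
\[
\psi_{u_c}(s)=\tfrac{s^{2}}{2}\|\Delta u_c\|_2^{2}-\tfrac{\mu}{q}s^{N(q-2)/4}\|u_c\|_q^{q}-\tfrac{s^{4^*}}{4^*}\|u_c\|_{4^*}^{4^*},
\]
whose three exponents satisfy $0<N(q-2)/4<2<4^*$. A quick inspection of $\psi_{u_c}'(s)/s$ shows that it has a unique maximum on $(0,\infty)$ and tends to $-\infty$ at both endpoints, so $\psi_{u_c}$ admits at most two critical points, namely a local minimum $s_1$ strictly followed (when it exists) by a local maximum $s_2$. Combined with $\psi_{u_c}(s)\to 0^{-}$ as $s\to 0^{+}$, $\psi_{u_c}(s)\to -\infty$ as $s\to\infty$, and $\psi_{u_c}(s^*)>0$, both $s_1<s_2$ must exist and $\psi_{u_c}>0$ only on an interval $(a,b)$ with $s_1<a<s_2<b$. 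The identity $\psi_{u_c}'(1)=Q(u_c)=0$ from (a) then forces $1\in\{s_1,s_2\}$: if $1=s_1$ then $\psi_{u_c}$ decreases from $0^{-}$ on $(0,1)$, yielding $\psi_{u_c}(s^*)<0$; if $1=s_2$ then $J(u_c)=\psi_{u_c}(s_2)\ge\psi_{u_c}(s^*)>0$. Either alternative contradicts (b), which finishes the argument.

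The main obstacle I anticipate is precisely this global shape analysis of $\psi_{u_c}$ and the correct positioning of $s^*$ relative to $s_1$ and $s_2$; once these are in place, the two sub-cases $1=s_1$ and $1=s_2$ are each dispatched in a single line.
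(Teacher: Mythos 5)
Your proposal is correct and follows essentially the same route as the paper: the fibering map $\psi_{u_c}$, the observation via $\psi_{u_c}'(s)/s$ that there are at most two critical points (a local minimum $s_1$ followed by a local maximum $s_2$), the positivity of $J$ on $\partial\Lambda_{\rho_0}(c)$, and the fact that $Q(u_c)=0$ places $s=1$ among $\{s_1,s_2\}$ while the attained minimum $m(c)<0$ bounds the ground state level. Your explicit positioning of $s^*$ and the two-case contradiction merely spell out the final step that the paper states tersely, so the arguments coincide in substance.
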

\begin{proof}
For any $v\in S(c)$ and $s>0$, by direct calculations we get
\begin{equation}\label{eq13}
\psi_{v}^{'}(s)=\frac{1}{s}Q(v_{s}).
\end{equation}
It is well known that any critical points of $J$ stay in the set $Q_{c}$. Hence, by \eqref{eq13}, if $w\in S(c)$ is a ground state, then there exists a $v\in S(c)$ and a $s_{0}>0$ such that $w=v_{s_{0}},\ J(w)=\psi_{v}(s_{0})$ and $\psi_{v}^{'}(s_{0})=0$.

We claim that the function $\psi_{v}^{'}(s)$ has at most two zeros. It suffices to show that the function
$\xi(s):=\frac{\psi_{v}^{'}(s)}{s}$ has at most two zeros. By direct computations,
$$\xi^{'}(s)=-2\alpha_{0}\frac{\mu N(q-2)}{4q}s^{2\alpha_{0}-1}\|u\|_{q}^{q}-2\alpha_{2}s^{2\alpha_{2}-1}\|u\|_{4^*}^{4^*}.$$
In view of $\alpha_{0}<0$ and $\alpha_{2}>0$, the function $\xi^{'}(s)$ has a unique zero, which implies that the claim holds.

Since $\psi_{v}(s)\rightarrow0^{-}$ as $s\rightarrow0$, $\psi_{v}(s)\rightarrow-\infty$ as $s\rightarrow\infty$, and $\psi_{v}(s)=J(v_{s})>0$ when $v_{s}\in\partial \Lambda_{\rho_0}(c)=\{u\in S(c): \|\Delta u\|_{2}^{2}=\rho_{0}\}$, necessarily $\psi_{v}^{'}$ has a first zero $s_{1}>0$ corresponding to a local minimum, while $\psi_{v}^{'}$ has a second zero $s_{2}>0$ corresponding to a local maximum. In particular, $v_{s_{1}}\in\Lambda_{\rho_0}(c)$ and $J(v_{s_{1}})=\psi_{v}(s_{1})<0$. Hence, if $m(c)$ is reached, then it is the ground state level and the theorem comes true.
\end{proof}

\begin{pot}
By Theorem \ref{com}, $J$ admits a local minimizer in $\Lambda_{\rho_0}(c)$. Then, by Theorem \ref{pro} it follows that this local minimizer is a ground state, and any ground state of $J$ on $S(c)$ corresponds to a local minimizer in $\Lambda_{\rho_0}(c)$.
\end{pot}

\section*{Acknowledgements}
This research was supported by NSFC(11971095). This work was done when X. J. Chang visited the Laboratoire de Math\'ematiques, Universit\'e de Bourgogne Franche-Comt\'e during the period from 2021 to 2022 under the support of China Scholarship Council (202006625034), and he would like to thank the Laboratoire for their support and kind hospitality.

\section*{References}

\end{document}